  \theoremstyle{plain}
    \newtheorem{thm}{Theorem}[section]
    \newtheorem{prop}[thm]{Proposition}
   \newtheorem{lemma}[thm]{Lemma}
    \newtheorem{subsec}[thm]{}
\theoremstyle{definition}
    \newtheorem{defn}[thm]{Definition}
        \newtheorem{remark}[thm]{Remark}
    \newtheorem{exam}[thm]{Example}
\theoremstyle{remark}
\title{}
\author{}
\date{}
\begin{document}
\title{Cohomology and deformations of hom-dendriform algebras and coalgebras}

\author{Apurba Das}
\address{Department of Mathematics and Statistics,
Indian Institute of Technology, Kanpur 208016, Uttar Pradesh, India.}
\email{apurbadas348@gmail.com}

\curraddr{}
\email{}

\subjclass[2010]{17A30, 17A99, 16E40}
\keywords{Hom-algebras, dendriform algebras, dendriform cohomology, multiplicative operad, formal deformation}

\begin{abstract}
Hom-dendriform algebras are twisted analog of dendriform algebras and are splitting
of hom-associative algebras. In this paper, we define a cohomology and deformation for hom-dendriform algebras. We relate this cohomology with the Hochschild-type cohomology of hom-associative algebras. We also describe similar results for the twisted analog of dendriform coalgebras.
\end{abstract}

\noindent

\thispagestyle{empty}

\maketitle


\vspace{0.2cm}

\section{Introduction}

Hom-type algebras first arise in the work of Hartwig, Larsson and Silvestrov in their study of the deformations of the Witt and Virasoro algebras using $\sigma$-derivations \cite{hart-larson-sil}. They observed that the space of $\sigma$-derivations satisfy a variation of the Jacobi identity, twisted by $\sigma$. An algebra satisfying such an identity is called a hom-Lie algebra. Other types of algebras (such as associative, Leibniz, Poisson, Hopf,$ \ldots)$ twisted by homomorphisms have also been studied in the last few years. See \cite{amm-ej-makh, makh-sil} and references therein for more details about such structures.

In this paper, we deal with the twisted version of another type algebras, called dendriform algebras. These algebras were introduced by Loday as Koszul dual of associative dialgebras \cite{loday}. Free dendriform algebra over a vector space has been constructed by using planar binary trees.
Dendriform algebras also arise from Rota-Baxter operators on some associative algebra  \cite{aguiar}. Recently, an explicit cohomology theory for dendriform algebras has been introduced and the formal deformation theory for dendriform algebras (as well as coalgebras) has been studied in \cite{das3, das4}. The cohomology involves certain combinatorial maps. See the references therein for more details about dendriform structures.

The twisted version of dendriform structures, called hom-dendriform structures was introduced in \cite{makh}. These algebras can be thought of as splitting of hom-associative algebras. They also arise from Rota-Baxter operator on hom-associative algebras. In \cite{ma-zheng} the authors study hom-dendriform algebras from the point of view of monoidal categories.
Our aim in this paper is to twist the construction of \cite{das3} by a homomorphism to formulate a cohomology for hom-dendriform algebras. The cochain groups defining the cohomology inherits a structure of an operad with a multiplication. Hence by a result of Gerstenhaber and Voronov \cite{gers-voro}, the cohomology carries a Gerstenhaber structure. We show that there is a morphism from the cohomology of a hom-dendriform algebra to the Hochschild-type cohomology of the corresponding hom-associative algebra. We also study formal one-parameter deformation of a hom-dendriform algebra following the approach of Gerstenhaber \cite{gers}. Our results about deformation are similar to the standard ones. The vanishing of the second cohomology implies the rigidity of the structure and the vanishing of the third cohomology ensures that a finite order deformation can be extended to a deformation of the next order.

Finally, we consider a dual version of the above results. Namely, we consider hom-dendriform coalgebras, their cohomology, and deformations. We show that there is a morphism from the cohomology of a hom-dendriform coalgebra to the coHochschild-type cohomology of hom-associative coalgebra. In our knowledge, the coHochschild-type cohomology of a hom-associative coalgebra has not been mentioned before.

Throughout the paper, we freely use the Hopf algebra and coalgebra terminology introduced in \cite{dnr} and \cite{radford}.
All vector spaces, linear maps are over a field $\mathbb{K}$ of characteristic zero, unless specified otherwise.

\section{Hom-associative and hom-dendriform algebras}
In this section, we recall the hom-analog of associative and dendriform algebras. Our main references are \cite{amm-ej-makh, makh, makh-sil}.

A hom-vector space is a pair $(A, \alpha)$ consists of a vector space $A$ with a linear map $\alpha : A \rightarrow A$. Hom-associative or hom-dendriform structures are defined on hom-vector spaces, rather than vector spaces.

\begin{defn}
A hom-associative algebra is a hom-vector space $(A, \alpha)$ together with a bilinear map $\mu : A \times A \rightarrow A,~ (a, b) \mapsto a \cdot b$, that satisfies
\begin{align*}
\alpha (a) \cdot (b \cdot c) = (a \cdot b) \cdot \alpha (c), ~~~ \text{ for all } ~ a, b, c \in A.
\end{align*}
\end{defn}

A hom-associative algebra is called multiplicative if $\alpha (a \cdot b) = \alpha (a) \cdot \alpha (b)$. By a hom-associative algebra, we shall always mean a multiplicative one, unless specified otherwise.

In \cite{amm-ej-makh, makh-sil} the authors define a Hochschild-type cohomology of a hom-associative algebra and study the formal deformation theory for these type of algebras. Let $(A, \alpha, \mu)$ be a hom-associative algebra. For each $n \geq 1$, they define the group of $n$-cochains as
\begin{align*}
C^n_{\alpha, \mathrm{Ass}}(A, A):= \{ f : A^{\otimes n} \rightarrow A |~ \alpha \circ f (a_1, \ldots, a_n ) = f (\alpha (a_1), \ldots, \alpha (a_n)) \}
\end{align*} 
and the differential $\delta_{\alpha, \mathrm{Ass}} : C^n_{\alpha, \mathrm{Ass}} (A, A) \rightarrow C^{n+1}_{\alpha, \mathrm{Ass}} (A, A)$ is given by
\begin{align}\label{hom-ass-diff}
(\delta_{\alpha, \mathrm{Ass}} f)(a_1, \ldots, a_{n+1}) =~& \alpha^{n-1} (a_1) \cdot f (a_2, \ldots, a_{n+1}) \\
~&+ \sum_{i=1}^n (-1)^{i} f (   \alpha (a_1), \ldots, \alpha (a_{i-1}), a_i \cdot a_{i+1}, \alpha (a_{i+2}), \ldots, \alpha (a_{n+1}) ) \nonumber \\
~&+ (-1)^{n+1} f (a_1, \ldots, a_n) \cdot \alpha^{n-1} (a_{n+1}). \nonumber
\end{align}
for $a_1, \ldots, a_{n+1} \in A$. It has been shown in \cite{das2} that the above cochain groups can be endowed with a structure of an operad with a multiplication. 
Hence by a result of Gerstenhaber and Voronov \cite{gers-voro}, the cohomology inherits a Gerstenhaber structure. 
More precisely, the partial compositions of the operad are given by
\begin{align*}
 (f \bullet_i g ) (a_1, \ldots, a_{m+n-1}) =  f (\alpha^{n-1} a_1, \ldots, \alpha^{n-1} a_{i-1}, g(a_i, \ldots, a_{i+n-1}), \alpha^{n-1} a_{i+n}, \ldots, \alpha^{n-1} a_{m+n-1}),
\end{align*}
for $f \in C^m_{\alpha, \mathrm{Ass}} (A, A), ~g \in C^n_{\alpha, \mathrm{Ass}} (A, A)$ and $1 \leq i \leq m$. The multiplication $\pi \in C^2_{\alpha, \mathrm{Ass}} (A, A)$ is given by $\pi (a, b) = a \cdot b$. One observes that the differential induced from the above multiplication is exactly given by (\ref{hom-ass-diff}). See \cite{gers-voro, das2, lod-val} for details about operads. We will follow their convensions throughout.

\begin{defn}
A hom-dendriform algebra consists of a hom-vector space $(A, \alpha)$ together with linear maps $\prec, \succ : A \otimes A \rightarrow A$ satisfying the following identities:
\begin{align}
(a \prec b ) \prec \alpha (c) =~& \alpha (a) \prec ( b \prec c + b \succ c ), \label{dend-def-1}\\
(a \succ b ) \prec \alpha (c) =~& \alpha (a) \succ (b \prec c),\\
(a \prec b + a \succ b) \succ \alpha (c) =~&  \alpha (a) \succ (b \succ c), \label{dend-def-3}
\end{align}
for all $a, b, c \in A$.
\end{defn}

A hom-dendriform algebra as above is denoted by the quadruple $(A, \alpha, \prec, \succ )$. A hom-dendriform algebra is said to be multiplicative if $\alpha ( a \prec b ) = \alpha (a) \prec \alpha (b)$ and $\alpha (a \succ b ) = \alpha (a) \succ \alpha (b)$, for $a, b \in A$. When $\alpha = \text{id}_A$, one obtains the definition of a dendriform algebra \cite{loday}. In the rest of the paper, by a hom-dendriform algebra, we shall always mean a multiplicative hom-dendriform algebra unless otherwise stated.

\begin{exam}
Let $(A, \prec, \succ)$ be a dendriform algebra and $\alpha : A \rightarrow A$ be a morphism between dendriform algebras. Then the quadruple $(A, \alpha, \alpha \circ \prec,~ \alpha \circ \succ)$ is a hom-dendriform algebra.
\end{exam}

\begin{exam}\label{rota-h} (Rota-Baxter operator on hom-associative algebra) Let $(A, \alpha, \mu)$ be a hom-associative algebra. A linear map $R: A \rightarrow A$ is said to be a Rota-Baxter operator (of weight zero) on $A$ if $\alpha \circ R = R \circ \alpha$ and the following holds
\begin{align*}
R(a) \cdot R(b) = R(a \cdot R(b) + R(a) \cdot b ), ~~ \text{ for all } a, b \in A.
\end{align*}
Then it follows that the hom-vector space $(A, \alpha)$ together with the operations $a \prec b = a \cdot R(b)$  and $a \succ b = R(a) \cdot b$ forms a hom-dendriform algebra \cite{makh}.
\end{exam}

\begin{remark}
In \cite{liu-makh-men-pan} the authors define Rota-Baxter operator on bihom-associative algebras (associative algebras twisted by two homomorphisms) and show that such an operator induces a bihom-dendriform structure thus generalizing Example \ref{rota-h}.
\end{remark}

In the next, we generalize the above example of hom-dendriform algebra in the setting of $\mathcal{O}$-operators on hom-associative algebras.

\begin{exam}\label{o-h} ($\mathcal{O}$-operator on hom-associative algebra) Let $A = ( A, \mu, \alpha )$ be a hom-associative algebra and $(M, \beta)$ be a hom-vector space together with maps $\mu^l : A \otimes M \rightarrow M, ~ (a,m) \rightarrow am$ and $\mu^r : M \otimes A \rightarrow M, ~ (m, a) \rightarrow ma$ satisfying $\beta (am) = \alpha (a) \beta (m)$, $\beta (ma ) = \beta (m) \alpha(a)$ and the followings
\begin{align*}
(a \cdot b) \beta (m) = \alpha (a) (bm), \quad (am) \alpha (b) = \alpha (a) (mb), \quad (ma) \alpha (b) = \beta(m) (a \cdot b).
\end{align*}
Such a tuple $(M, \beta, \mu^l, \mu^r)$ is called a representation of $A$. An $\mathcal{O}$-operator on $A$ with respect to the above representation is a linear map $R: M \rightarrow A$ satisfying $\alpha \circ R = R \circ \beta$ and $R(m) \cdot R(n) = R (m R(n) + R(m)n),$ for all $m, n \in M$. In this case, $(M, \beta)$ together with operations $m \prec n = m R(n)$ and $m \succ n = R(m)n$ forms a hom-dendriform algebra.
\end{exam}

Like dendriform algebras are related to several other algebras, hom-dendriform algebras are related to hom analog of various algebras \cite{makh}. Here we mention the relation with hom-associative algebras and hom-Lie algebras.
Let $(A, \alpha, \prec, \succ )$ be a hom-dendriform algebra. Then the new operation $a \ast b$  defined by
\begin{align*}
a \ast b = a \prec b + a \succ b
\end{align*}
makes $(A, \alpha, \ast)$ into a hom-associative algebra. Therefore, a hom-dendriform algebra can be thought of as an splitting of a hom-associative algebra. Note that the corresponding hom-Lie bracket obtained from the skew-symmetrization of the hom-associative product is given by $[a, b ] = a \ast b - b \ast a$. On the other hand, a hom-dendriform algebra $(A, \alpha, \prec, \succ )$ also induces a left hom-preLie product $ a \diamond b = a \succ b -  b \prec a$, that is, the following holds
\begin{align*}
( a \diamond b ) \diamond \alpha (c) - \alpha (a) \diamond ( b \diamond c) = ( b \diamond a ) \diamond \alpha (c) - \alpha (b) \diamond ( a \diamond c).
\end{align*}
See \cite{makh} for details. The skew-symmetrization of a left hom-preLie product is a hom-Lie bracket and here it is given by
$[a, b]' = a \diamond b - b \diamond a$, for $a, b \in A$.
Observe that the hom-Lie brackets on $A$ induced from the hom-associative product and hom-preLie product are same.

\section{Cohomology}\label{sec-3}

Our aim in this section is to introduce a cohomology theory for hom-dendriform algebras as a twisted analog of the cohomology introduced in \cite{das3}. This cohomology can be thought of as a splitting of the cohomology of hom-associative algebras. We will show that the cochain groups defining the cohomology of a hom-dendriform algebra carries a structure of an operad with a multiplication. (This construction is the twisted analog of the construction defined for dendriform algebras.) Hence the cohomology inherits a Gerstenhaber structure. In the next section, we show that this cohomology governs the formal deformation of the hom-dendriform structure.

We recall certain combinatorial maps which are defined in \cite{das3}. Let $C_n$ be the set of first $n$ natural numbers. For convenience, we denote them by $\{ [1], \ldots, [n] \}.$ For each $m, n \geq 1$ and $1 \leq i \leq m$, there are maps $R_0 (m; 1, \ldots, \underbrace{n}_{i\text{-th}} , \ldots, 1 ) : C_{m+n-1} \rightarrow C_m$ and $R_i (m; 1, \ldots, \underbrace{n}_{i\text{-th}}, \ldots, 1 ) : C_{m+n-1} \rightarrow \mathbb{K}[C_n]$ which are given by
\begin{align*} R_0 (m; 1, \ldots, 1, n, 1, \ldots, 1) ([r]) ~=~
\begin{cases} [r] ~~~ &\text{ if } ~~ r \leq i-1 \\ [i] ~~~ &\text{ if } i \leq r \leq i +n -1 \\
[r -n + 1] ~~~ &\text{ if } i +n \leq r \leq m+n -1, \end{cases}
\end{align*}
\begin{align*} R_i (m; 1, \ldots, 1, n, 1, \ldots, 1) ([r]) ~=~
\begin{cases} [1] + [2] + \cdots + [n] ~~~ &\text{ if } ~~ r \leq i-1 \\ [r - (i-1)] ~~~ &\text{ if } i \leq r \leq i +n -1 \\
[1]+ [2] + \cdots + [n] ~~~ &\text{ if } i +n \leq r \leq m+n -1. \end{cases}
\end{align*}

Let $(A, \alpha, \prec, \succ)$ be a hom-dendriform algebra. We define the group of $n$-cochains by $C^0_{\alpha, \mathrm{Dend}} (A, A) := 0$ and
\begin{align*}
C^n_{\alpha, \mathrm{Dend}} (A, A) := \big\{ f \in \text{Hom}_\mathbb{K} ( \mathbb{K}[C_n] \otimes A^{\otimes n}, A ) |~ \alpha (   f ([r]; a_1, \ldots, a_n )) = f ([r]; \alpha (a_1), \ldots, \alpha (a_n) ),\\ \text{ for all } [r] \in C_n \big\}, 
\end{align*}
for $n \geq 1.$ Then the collection of spaces $\mathcal{O}(n) = C^n_{\alpha, \mathrm{Dend}} (A, A), n \geq 1$, inherits a structure of an operad with partial compositions 
$\bullet_i : \mathcal{O}(m) \otimes \mathcal{O}(n) \rightarrow \mathcal{O}(m+n-1)$ given by
\begin{align*}
&( f \bullet_i g) ([r]; a_1, \ldots, a_{m+n-1})\\
&= f \big( R_0 (m; 1, \ldots, n, \ldots, 1)[r];~ \alpha^{n-1} a_1, \ldots, \alpha^{n-1} a_{i-1}, g (R_i (m; 1, \ldots, n, \ldots, 1)[r]; a_i, \ldots, a_{i+n-1}),\\
& \hspace*{10cm} ~~ \alpha^{n-1} a_{i+n}, \ldots, \alpha^{n-1} a_{m+n-1} \big),
\end{align*}
for $f \in \mathcal{O}(m), ~ g \in \mathcal{O}(n),~ [r] \in C_{m+n-1}$ and $a_1, \ldots, a_{m+n-1} \in A$. 
The proof is a combination of \cite{das3} where the case $\alpha = \text{id}_A$ has been considered, and \cite{das2} where the linear map $\alpha$ has been used to twist the endomorphism operad.

Note that, in this case, there are operations
\begin{align}
\bullet :~& \mathcal{O}(m ) \otimes \mathcal{O}(n) \rightarrow \mathcal{O}(m+n-1),~~~ f \bullet g := \sum_{i=1}^m (-1)^{(i-1)(n-1)} ~f \bullet_i g,\\
 [~, ~]:~& \mathcal{O}(m ) \otimes \mathcal{O}(n) \rightarrow \mathcal{O}(m+n-1), ~~~ [f, g] := f \bullet g - (-1)^{(m-1)(n-1)} g \bullet f.
 \end{align}
Moreover, one can define an element 
$\pi \in \mathcal{O}(2) = C^2_{\alpha, \mathrm{Dend}} (A, A)$ by
\begin{align*}
\pi ([1]; a, b ) = a \prec b ~~~ \text{ and } ~~~ \pi ( [2]; a , b) = a \succ b.
\end{align*}
It is a straightforward calculation that
\begin{align*}
(\pi \bullet \pi ) ([r]; a, b, c) =
\begin{cases} (a \prec b) \prec \alpha (c) - \alpha(a) \prec (b \prec c + b \succ c) ~~~ &\text{ if} ~~ [r] = [1] \\
(a \succ b) \prec \alpha (c) - \alpha (a) \succ (b \prec c)  ~~~ &\text{ if } [r] = [2] \\
(a \prec b + a \succ b ) \prec \alpha (c) - \alpha (a) \succ ( b \succ c)  ~~~ &\text{ if } [r] = [3]. \end{cases}
\end{align*}
Thus, it follows from the definition of a hom-dendriform algebra that $\pi$ defines a multiplication on the above operad $(\{ \mathcal{O}(n) \}_{n \geq 1}, \bullet_i ).$ The multiplication $\pi \in \mathcal{O}(2)$ induces an associative product $\mathcal{O}(m) \otimes \mathcal{O}(n) \rightarrow \mathcal{O}(m+n)$ given by
\begin{align}
 f \cdot g = (-1)^m~ (\pi \bullet_2 g) \bullet_1 f
 \end{align}
and a degree $+1$ differential $\delta_\pi : \mathcal{O}(n) \rightarrow \mathcal{O}(n+1), ~f \mapsto \pi \bullet f - (-1)^{n-1} f \bullet \pi$. We also denote this differential by $\delta_{\alpha, \mathrm{Dend}}$ and it is explicitly given by
\begin{align*}
&(\delta_{\mathrm{Dend}} f) ([r]; a_1 , \ldots, a_{n+1}) \\
&=  \pi \big( R_0 (2;1,n) [r]; ~\alpha^{n-1} a_1, f (R_2 (2;1,n)[r]; a_2, \ldots, a_{n+1})   \big) \\
&+ \sum_{i=1}^n  (-1)^i~ f \big(  R_0 (n; 1, \ldots, 2, \ldots, 1)[r]; \alpha(a_1), \ldots, \alpha(a_{i-1}), \pi (R_i (1, \ldots, 2, \ldots, 1)[r]; a_i, a_{i+1}), \alpha(a_{i+2}), \ldots, \alpha(a_{n+1})   \big) \\
&+ (-1)^{n+1} ~\pi \big( R_0 (2; n, 1) [r];~ f (R_1 (2;n,1)[r]; a_1, \ldots, a_n), \alpha^{n-1}(a_{n+1})   \big),
\end{align*}
for $f \in C^n_{\alpha, \mathrm{Dend}}(A, A), ~[r] \in C_{n+1}$ and $a_1, \ldots, a_{n+1} \in A.$ The corresponding cohomology groups are denoted by $H^n_{\alpha, \mathrm{Dend}}(A, A)$, for $n \geq 1$. When $\alpha = \mathrm{id}_A$, the above cohomology coincides with the one constructed in \cite{das3}.

\medskip

Hence it follows from Gerstenhaber and Voronov \cite{gers-voro} that the cochain groups of a hom-dendriform algebra inherit a homotopy $G$-algebra structure. As a consequence, the cohomology carries a Gerstenhaber structure.

It follows from the definition that
\begin{align*}
H^1_{\alpha, \text{Dend}} = \big\{ f : A \rightarrow A |~ \alpha \circ f = f \circ \alpha ~~~ \text{ and }
~&f ( a \prec b) = a \prec f (b) + f(a) \prec b,\\
~&f ( a \succ b) = a \succ f (b) + f(a) \succ b \big\}.
\end{align*}
Thus, $H^1_{\alpha, \text{Dend}} (A, A)$ is the space of all derivations for the products $\prec, \succ$ which commute with $\alpha$.  In the next section, we interpret the second cohomology group $H^2_{\alpha, \text{Dend}} (A, A)$ as the equivalence classes of infinitesimal deformations of $A$.

\begin{remark}
One may also define cohomology of a hom-dendriform algebra with coefficients in a suitable representation. In such a case, the second cohomology with coefficients in a representation can also be represented by the equivalence classes of abelian extensions. See \cite{das3} for the case of dendriform algebras, that is, when $\alpha = \text{id}_A$.
\end{remark}

In the next, we relate the cohomology of a hom-dendriform algebra (as defined above) with the cohomology of the corresponding hom-associative algebra. More precisely, we have the following whose proof is similar to \cite{das3}.

\begin{thm}
Let $(A, \alpha, \prec, \succ)$ be a hom-dendriform algebra with the corresponding hom-associative algebra $(A, \alpha, \star)$. Then the map
\begin{align*}
S : C^n_{\alpha, \mathrm{Dend}} (A, A) \rightarrow C^n_{\alpha , \mathrm{Ass}}(A, A), ~ f \mapsto f_{[1]} + \cdots + f_{[n]}, ~~~ \text{ for } n \geq 1
\end{align*}
defines a morphism between operads which preserve the respective multiplications. Hence the induced map $S : H^n_{\alpha, \mathrm{Dend}} (A, A) \rightarrow H^n_{\alpha, \mathrm{Ass}} (A, A)$ on cohomology is a morphism between Gerstenhaber algebras.
\end{thm}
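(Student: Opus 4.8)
The plan is to verify directly that $S$ intertwines the partial compositions and the multiplications, and then to invoke the formal machinery of Gerstenhaber--Voronov \cite{gers-voro} to pass to cohomology. First I would check well-definedness. Writing $f_{[r]} := f([r]; -) : A^{\otimes n} \to A$ for the components of $f \in C^n_{\alpha, \mathrm{Dend}}(A,A)$, the defining constraint of the dendriform cochains says precisely $\alpha \circ f_{[r]} = f_{[r]} \circ \alpha^{\otimes n}$ for each $[r]$; summing over $[r]$ shows $S(f) = \sum_{r=1}^n f_{[r]}$ commutes with $\alpha$, so $S(f) \in C^n_{\alpha, \mathrm{Ass}}(A,A)$. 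The theorem then reduces to two identities: (i) $S(f \bullet_i g) = S(f) \bullet_i S(g)$ for all $f \in \mathcal{O}(m)$, $g \in \mathcal{O}(n)$ and $1 \le i \le m$, where on the right the $\bullet_i$ is the hom-associative partial composition; and (ii) $S(\pi) = \pi_{\mathrm{Ass}}$, where $\pi_{\mathrm{Ass}}(a,b) = a \star b$. Identity (ii) is immediate, since $S(\pi)(a,b) = \pi([1];a,b) + \pi([2];a,b) = a \prec b + a \succ b = a \star b$.

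The combinatorial core is (i). I would expand both sides on arguments $a_1, \ldots, a_{m+n-1}$ and reorganize the left-hand sum $\sum_{[r] \in C_{m+n-1}} (f \bullet_i g)([r]; \cdots)$ by partitioning $C_{m+n-1}$ along the fibers of $R_0(m;1,\ldots,n,\ldots,1)$. The definition of $R_0$ gives exactly three regimes: for a target index $[s]$ with $s < i$ there is the single preimage $r = s$; for $s > i$ the single preimage $r = s+n-1$; and for $s = i$ the whole block $r = i, \ldots, i+n-1$. The behaviour of $R_i$ dovetails with this. On the two outer regimes $R_i[r] = [1] + \cdots + [n]$, so linearity of $g$ in the $\mathbb{K}[C_n]$-slot collapses $g(R_i[r]; a_i, \ldots, a_{i+n-1})$ to $\sum_{t=1}^n g_{[t]}(a_i, \ldots, a_{i+n-1}) = S(g)(a_i, \ldots, a_{i+n-1})$. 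On the inner block $R_i[r] = [r-(i-1)]$ runs through $[1], \ldots, [n]$ exactly once, so summing over $r = i, \ldots, i+n-1$ and using multilinearity of $f_{[i]}$ in its $i$-th entry again produces $S(g)(a_i, \ldots, a_{i+n-1})$ in the $i$-th slot. In every regime the surviving $f$-component is $f_{[s]}$ and the remaining entries carry the same $\alpha^{n-1}$ decorations as in the associative composition, so the total equals $\sum_{s=1}^m f_{[s]}(\alpha^{n-1} a_1, \ldots, S(g)(a_i, \ldots, a_{i+n-1}), \ldots, \alpha^{n-1} a_{m+n-1}) = (S(f) \bullet_i S(g))(a_1, \ldots, a_{m+n-1})$, which is (i).

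Granting (i) and (ii), $S$ is a morphism of operads with multiplication. Because $f \bullet g$, the bracket $[\,,\,]$ and the differential $\delta_\pi = [\pi, -]$ are built from the $\bullet_i$ using the same sign conventions in both operads, $S$ automatically satisfies $S(f \bullet g) = S(f) \bullet S(g)$, $S[f,g] = [S(f), S(g)]$ and $S \circ \delta_\pi = \delta_\pi \circ S$; the last relation says $S$ is a cochain map, hence it descends to $S : H^n_{\alpha,\mathrm{Dend}}(A,A) \to H^n_{\alpha,\mathrm{Ass}}(A,A)$. Finally, since the Gerstenhaber bracket and cup product on each cohomology are induced from $[\,,\,]$ and from the $\pi$-twisted product $f \cdot g = (-1)^m (\pi \bullet_2 g)\bullet_1 f$ through \cite{gers-voro}, and $S$ respects all of these together with $\pi$, the induced map is a morphism of Gerstenhaber algebras.

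I expect the only real obstacle to be the bookkeeping in (i): one must keep the three $R_0$-regimes, the two collapsing behaviours of $R_i$, the enumeration of $[r-(i-1)]$ over the inner block, and the $\alpha^{n-1}$ decorations all aligned at once. Once the fiber partition of $R_0$ is set up correctly, the identity falls out of the linearity of $f$ and $g$; the signs require no attention here, since the partial-composition identity is sign-free and the later sign-weighted combinations then match verbatim.
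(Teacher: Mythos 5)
Your proposal is correct, and it follows the route the paper itself intends: the paper simply defers the proof to the untwisted case in \cite{das3}, and the argument there is exactly your direct verification that $S$ intertwines the partial compositions (via the fiber decomposition of $R_0$ and the two collapsing behaviours of $R_i$) and sends $\pi = (\prec,\succ)$ to the hom-associative multiplication $a \star b = a\prec b + a \succ b$, after which Gerstenhaber--Voronov gives the statement on cohomology. Your bookkeeping in step (i) checks out: the outer regimes contribute $f_{[s]}$ for $s\neq i$ with $g$ already summed, and the inner block $r = i,\dots,i+n-1$ enumerates $g_{[1]},\dots,g_{[n]}$ inside $f_{[i]}$, reassembling $S(g)$ by linearity.
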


\section{Formal deformations}

Our aim in this section is to study the formal deformation theory for hom-dendriform algebras along the line of Gerstenhaber \cite{gers}. We show that the cohomology of hom-dendriform algebra governs the corresponding deformation.

Let $(A, \alpha, \prec, \succ)$ be a hom-dendriform algebra. Consider the space $A[[t]]$ of formal power series in $t$ with coefficients in $A$. Then $A[[t]]$ is a $\mathbb{K}[[t]]$-module and $A[[t]] \cong A \otimes_\mathbb{K} \mathbb{K}[[t]]$ when $A$ is finite dimensional.

\begin{defn}
A formal deformation of a hom-dendriform algebra $(A, \alpha, \prec, \succ)$ consists of formal power series $\prec_t = \sum_{i=0}^\infty \prec_i t^i$ and $\succ_t = \sum_{i=0}^\infty \succ_i t^i$ of bilinear maps on $A$ (with $\prec_0 = \prec$ and $\succ_0 = \succ)$ such that $(A[[t]], \alpha, \prec_t, \succ_t)$ is a hom-dendriform algebra over $\mathbb{K}[[t]].$
\end{defn}

Thus, if $(\prec_t, \succ_t)$ is a deformation, then for each $n \geq 0$ and $a, b, c \in A$, one must have
\begin{align}
\sum_{i+j = n} (a \prec_i b ) \prec_j \alpha (c) =~& \sum_{i+j = n} \alpha (a) \prec_i ( b \prec_j c + b \succ_j c ), \label{defr-1}\\
\sum_{i+j = n} (a \succ_i b ) \prec_j \alpha (c) =~& \sum_{i+j = n} \alpha (a) \succ_i (b \prec_j c), \label{defr-2}\\
\sum_{i+j = n} (a \prec_i b + a \succ_i b) \succ_j \alpha (c) =~& \sum_{i+j = n} \alpha (a) \succ_i (b \succ_j c). \label{defr-3}
\end{align}
The identities (\ref{defr-1})-(\ref{defr-3}) are called deformation equations. For each $i \geq 0$, we define $\pi_i \in \mathcal{O}(2) = C^2_{\alpha, \mathrm{dend}} (A, A)$ by
\begin{align*}
\pi_i ([1]; a, b) = a \prec_i b ~~~~ \text{ and } ~~~~ \pi_i ([2]; a, b) = a \succ_i b.
\end{align*}
Then the deformation equations can be simply expressed as
\begin{align}
\sum_{i+j = n} \pi_i \bullet \pi_j = 0, ~~~ \text{ for } n \geq 0.
\end{align}
The above identity automatically holds for $n = 0$ as $\pi_0 = \pi$ defines a multiplication on the operad. For $n = 1$, we have $ \pi \bullet \pi_1 + \pi_1 \bullet \pi = 0$ which implies that $\delta_{\alpha, \text{Dend}} (\pi_1) = 0$. Hence $\pi_1 \in C^2_{\alpha, \mathrm{Dend}} (A, A)$ defines a $2$-cocycle in the hom-dendriform algebra cohomology of $A$. It is called the infinitesimal of the deformation.

\begin{defn}
Two deformations $(\prec_t, \succ_t)$ and $(\prec_t', \succ_t')$ of a hom-dendriform algebra $(A, \alpha, \prec, \succ)$ are said to be equivalent if there is a formal isomorphism $\Phi_t = \sum_{i=0}^\infty \Phi_i t^i : A[[t]] \rightarrow A[[t]]$ with each $\Phi_i \in \text{Hom} (A, A)$ that commute with $\alpha$ and $\Phi_0 = \text{id}_A$ such that
\begin{align*}
\Phi_t ( a \prec_t b ) = \Phi_t (a) \prec_t' \Phi_t (b)  ~~~~ \text{ ~~and ~~} ~~~~  \Phi_t ( a \succ_t b ) = \Phi_t (a) \succ_t' \Phi_t (b). 
\end{align*}
The above condition of equivalence can be expressed as
\begin{align*}
\sum_{i+j = n} \Phi_i \bullet \pi_j ([r]; a, b) = \sum_{i+j+k = n} \pi_k' ([r]; \Phi_i (a), \Phi_j (b)), ~~~ \text{ for all } n \geq 0.
\end{align*}
\end{defn}
This always holds for $n = 0$ (as $\prec_0 = \prec_0' = \prec$, $\succ_0 = \succ_0' = \succ$ and $\Phi_0 = \text{id}_A$), whereas for $n =1$, we obtain
\begin{align*}
\pi_1 - \pi_1' = \pi \bullet \Phi_1 - \Phi_1 \bullet \pi = \delta_{\alpha, \text{Dend}} (\Phi_1).
\end{align*}
This shows that equivalent deformations have cohomologous infinitesimals, hence, they corresponds to same cohomology class in $H^2_{\alpha, \mathrm{Dend}}(A, A).$

To obtain a one-to-one correspondence between the second cohomology group and equivalence classes of certain type deformations, we have to use the following truncated version of formal deformations.

\begin{defn}
An infinitesimal deformation of a hom-dendriform algebra $A = ( A, \alpha, \prec, \succ)$ is a deformation of $A$ over $\mathbb{K}[[t]]/ (t^2)$ (the local Artinian ring of dual numbers).
\end{defn}

In other words, an infinitesimal deformation of $A$ is given by a pair $(\prec_t, \succ_t)$ in which $\prec_t = \prec + \prec_1 t$ and $\succ_t = \succ + \succ_1 t$ such that $\pi_1 = (\prec_1, \succ_1)$ defines a $2$-cocycle in the cohomology of $A$. More precisely, we have the following.

\begin{prop}
There is a one-to-one correspondence between the space of equivalence classes of infinitesimal deformations and the second cohomology group $H^2_{\alpha, \mathrm{Dend}} (A, A).$
\end{prop}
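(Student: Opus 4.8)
The plan is to exhibit an explicit bijection and check it is well defined in both directions. I would define a map $\Theta$ from the set of infinitesimal deformations of $A = (A,\alpha,\prec,\succ)$ to $H^2_{\alpha,\mathrm{Dend}}(A,A)$ sending $(\prec_t,\succ_t)$, with $\prec_t = \prec + \prec_1 t$ and $\succ_t = \succ + \succ_1 t$, to the cohomology class $[\pi_1]$ of its infinitesimal $\pi_1 = (\prec_1,\succ_1)$. The two computations already recorded above supply well-definedness for free: the order-one deformation equation forces $\pi \bullet \pi_1 + \pi_1 \bullet \pi = \delta_{\alpha,\mathrm{Dend}}(\pi_1) = 0$, so $\pi_1$ is genuinely a $2$-cocycle, and the order-one equivalence condition gives $\pi_1 - \pi_1' = \delta_{\alpha,\mathrm{Dend}}(\Phi_1)$, so equivalent infinitesimal deformations have cohomologous infinitesimals. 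Hence $\Theta$ descends to a map on equivalence classes, and it remains to prove surjectivity and injectivity of this induced map.

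For surjectivity I would start from an arbitrary class in $H^2_{\alpha,\mathrm{Dend}}(A,A)$ and pick a representing cocycle $\pi_1 = (\prec_1,\succ_1) \in C^2_{\alpha,\mathrm{Dend}}(A,A)$. The very membership $\pi_1 \in C^2_{\alpha,\mathrm{Dend}}(A,A)$ is the requirement $\alpha(a\prec_1 b) = \alpha(a)\prec_1\alpha(b)$ and $\alpha(a\succ_1 b) = \alpha(a)\succ_1\alpha(b)$, so the candidate operations $\prec_t = \prec + \prec_1 t$ and $\succ_t = \succ+\succ_1 t$ are multiplicative with respect to $\alpha$ modulo $t^2$. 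Since only $n=0$ and $n=1$ survive in $\mathbb{K}[[t]]/(t^2)$, the deformation equations $\sum_{i+j=n}\pi_i\bullet\pi_j = 0$ reduce to the automatic identity $\pi\bullet\pi = 0$ at $n=0$ and to $\delta_{\alpha,\mathrm{Dend}}(\pi_1) = 0$ at $n=1$, which holds by hypothesis. Thus $(\prec_t,\succ_t)$ is an infinitesimal deformation with $\Theta[\,(\prec_t,\succ_t)\,] = [\pi_1]$, giving surjectivity.

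For injectivity, suppose two infinitesimal deformations $(\prec_t,\succ_t)$ and $(\prec_t',\succ_t')$ satisfy $[\pi_1] = [\pi_1']$; I would produce an explicit equivalence between them. Cohomologousness means $\pi_1 - \pi_1' = \delta_{\alpha,\mathrm{Dend}}(\Phi_1)$ for some $\Phi_1 \in C^1_{\alpha,\mathrm{Dend}}(A,A)$, i.e.\ a linear map $\Phi_1 : A \to A$ with $\alpha\circ\Phi_1 = \Phi_1\circ\alpha$. Setting $\Phi_t = \mathrm{id}_A + \Phi_1 t$ then gives a $\mathbb{K}[[t]]/(t^2)$-linear endomorphism that commutes with $\alpha$, has $\Phi_0 = \mathrm{id}_A$, and is invertible modulo $t^2$ with inverse $\mathrm{id}_A - \Phi_1 t$, hence a legitimate formal isomorphism in the sense of the definition of equivalence. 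One then verifies the equivalence identity $\sum_{i+j=n}\Phi_i\bullet\pi_j([r];a,b) = \sum_{i+j+k=n}\pi_k'([r];\Phi_i(a),\Phi_j(b))$: at $n=0$ it is automatic since $\Phi_0 = \mathrm{id}_A$ and $\pi_0 = \pi_0' = \pi$, and at $n=1$ it unwinds precisely to $\pi_1 - \pi_1' = \delta_{\alpha,\mathrm{Dend}}(\Phi_1)$, which is our assumption. Thus the two deformations are equivalent and $\Theta$ is injective on equivalence classes.

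The computations are all first order, so there is no analytic or combinatorial difficulty; the one place demanding care is the bookkeeping in the injectivity step, where one must expand the partial compositions $\Phi_i\bullet\pi_j$ and the terms $\pi_k'([r];\Phi_i(a),\Phi_j(b))$ against the combinatorial maps $R_0,R_i$ specialized to $n=1$ (where $\mathbb{K}[C_1]$ collapses all the index data) and confirm that the order-one equivalence condition coincides exactly with the coboundary $\delta_{\alpha,\mathrm{Dend}}(\Phi_1)$. Once that identification is in place, the three verifications assemble into the asserted one-to-one correspondence.
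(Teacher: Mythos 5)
Your proposal is correct and follows essentially the same route as the paper: both construct the map $[(\prec_t,\succ_t)]\mapsto[\pi_1]$, verify well-definedness from the order-one deformation and equivalence identities, and invert it by sending a cocycle $\pi_1$ to $(\prec+\prec_1 t,\ \succ+\succ_1 t)$ and using $\Phi_t=\mathrm{id}_A+\Phi_1 t$ to show cohomologous cocycles yield equivalent infinitesimal deformations. Your reorganization into surjectivity and injectivity of the forward map is only a cosmetic difference from the paper's check that the inverse map is well defined.
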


\begin{proof}
It is already shown that if two infinitesimal deformations $(\prec_t =~ \prec + \prec_1 t , ~ \succ_t = ~ \succ + \succ_1 t)$ and $(\prec_t' =~ \prec + \prec_1' t , ~ \succ_t' =~  \succ + \succ_1' t)$ are equivalent, then the $2$-cocycles $\pi_1 = ( \prec_1, \succ_1)$ and $\pi_1' = ( \prec_1', \succ_1')$ are cohomologous. Hence, the map
\begin{align*}
\text{infinitesimal deformations}/\sim  ~~\longrightarrow~ H^2_{\alpha, \mathrm{Dend}} ( A, A)
\end{align*}
defined by $[(\prec_t, \succ_t)] \mapsto [\pi_1]$ is well defined. This map turns out to be bijective with the inverse given as follows. For any $2$-cocycle $\pi_1 = (\prec_1, \succ_1) \in C^2_{\alpha, \mathrm{Dend}} (A, A)$, the pair  $(\prec_t =~ \prec + \prec_1 t , ~ \succ_t =~  \succ + \succ_1 t)$ defines an infinitesimal deformation. If $\pi_1' = (\prec_1', \succ_1') \in C^2_{\alpha, \mathrm{Dend}} (A, A)$ is another $2$-cocycle cohomologous to $\pi_1$, then we have $\pi_1 - \pi_1' = \delta_{\alpha, \mathrm{Dend}} (\Phi_1)$, for some $\Phi_1 \in C^1_{\alpha, \mathrm{Dend}} (A, A) = \mathrm{Hom}( \mathbb{K}[C_1] \otimes A , A ) \simeq \mathrm{Hom} (A, A)$. In such a case, $\Phi_t = \text{id}_A + \Phi_1 t $ defines an equivalence between infinitesimal deformations $(\prec_t =~ \prec + \prec_1 t , ~ \succ_t =~  \succ + \succ_1 t)$ and $(\prec_t' =~ \prec + \prec_1' t , ~ \succ_t' =~  \succ + \succ_1' t)$. Hence the inverse map is also well defined.
\end{proof}

\begin{defn}
A deformation $(\prec_t, \succ_t)$ is said to be trivial if it is equivalent to the deformation $(\prec_t' = \prec,~ \succ_t' = \succ).$
\end{defn}

\begin{lemma}\label{non-triv-lemma}
Let $(\prec_t, \succ_t)$ be a non-trivial deformation of a hom-dendriform algebra $A$. Then it is equivalent to some deformation $(\prec_t', \succ_t')$ in which $\prec_t' =~ \prec + \sum_{i \geq p} \prec_i t^i$ and $\succ_t' =~ \succ + \sum_{i \geq p} \succ_i t^i$, where the first non-zero term $\pi_p = ( \prec_p, \succ_p )$ is a $2$-cocycle but not a coboundary.
\end{lemma}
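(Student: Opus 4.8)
The goal is to show that any non-trivial deformation is equivalent to one whose leading perturbation term is a $2$-cocycle that is not a coboundary. The strategy is the standard Gerstenhaber-type argument: start with the deformation, locate its first nonzero perturbation term, show it is automatically a cocycle, and then use non-triviality together with equivalence transformations to push the leading term up the filtration until it can no longer be killed by a coboundary.

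I would proceed as follows. Suppose $(\prec_t, \succ_t)$ is non-trivial, and let $\pi_p = (\prec_p, \succ_p)$ be the first nonzero term beyond $\pi_0 = \pi$; that is, $\prec_t = \prec + \sum_{i \geq p} \prec_i t^i$ and similarly for $\succ_t$. Examining the deformation equation $\sum_{i+j=n} \pi_i \bullet \pi_j = 0$ at order $n = p$, and using that $\pi_i = 0$ for $1 \leq i \leq p-1$, only the cross terms $\pi_0 \bullet \pi_p + \pi_p \bullet \pi_0$ survive, which gives $\delta_{\alpha, \mathrm{Dend}}(\pi_p) = \pi \bullet \pi_p - (-1)^{2-1}\pi_p \bullet \pi = 0$. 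Hence $\pi_p$ is a $2$-cocycle. If $\pi_p$ is not a coboundary we are done. Otherwise, write $\pi_p = \delta_{\alpha,\mathrm{Dend}}(\Phi_p)$ for some $\Phi_p \in C^1_{\alpha,\mathrm{Dend}}(A,A) \simeq \mathrm{Hom}(A,A)$ commuting with $\alpha$, and apply the formal isomorphism $\Phi_t = \mathrm{id}_A + \Phi_p t^p$ to obtain an equivalent deformation $(\prec_t', \succ_t')$.

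The key computation is that this equivalence cancels the term in degree $p$: working through the equivalence condition order by order, the transformed deformation satisfies $\pi_i' = \pi_i$ for $i < p$, while in degree $p$ one finds $\pi_p' = \pi_p - \delta_{\alpha,\mathrm{Dend}}(\Phi_p) = 0$. Thus the leading nonzero term of the equivalent deformation has strictly larger order. The argument at each stage is identical: the new leading term is again a cocycle by the order-$p'$ deformation equation, and if it is a coboundary we repeat. Since equivalence is transitive, iterating produces a deformation whose leading term is a cocycle but not a coboundary.

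The main obstacle is the termination of this iteration. A priori one could keep raising the order of the leading term indefinitely, and the resulting chain of equivalences need not converge to a deformation with the desired property; it might instead be equivalent to the trivial deformation $(\prec, \succ)$. Here is where non-triviality is essential: if the leading term were a coboundary at every stage, the iteration would show $(\prec_t, \succ_t)$ is equivalent to the trivial deformation, contradicting the hypothesis. The delicate point is to make this termination argument precise, which requires checking that the infinite composite $\Phi_t = \cdots \circ (\mathrm{id}_A + \Phi_{p_2} t^{p_2}) \circ (\mathrm{id}_A + \Phi_{p_1} t^{p_1})$ is well defined as a formal power series (each coefficient of $t^k$ is a finite sum since the orders $p_1 < p_2 < \cdots$ strictly increase) and that the limiting deformation is genuinely trivial. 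Concluding that a non-trivial deformation cannot be killed this way then forces the process to halt at a non-coboundary cocycle, giving the claim.
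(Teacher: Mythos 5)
Your proposal follows essentially the same argument as the paper: identify the first nonzero term $\pi_p$, observe that the order-$p$ deformation equation forces $\delta_{\alpha,\mathrm{Dend}}(\pi_p)=0$, and if $\pi_p$ is a coboundary, use the equivalence $\Phi_t=\mathrm{id}_A+\Phi_p t^p$ to push the leading term to higher order and iterate. In fact you are somewhat more careful than the paper, which simply asserts ``in this way, we get a required type of equivalent deformation'': your observation that a non-terminating iteration would assemble (via the well-defined infinite composite of the $\Phi_t$'s) into an equivalence with the trivial deformation, contradicting non-triviality, is exactly the point needed to justify that the process halts.
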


\begin{proof}
Let $(\prec_t, \succ_t)$ be a non-trivial deformation of a hom-dendriform algebra such that $\pi_1 := (\prec_1, \succ_1) = 0, \ldots, \pi_{n-1} := (\prec_{n-1}, \succ_{n-1}) = 0$ and $\pi_n := (\prec_n, \succ_n)$ is the first non-zero term. Then it has been shown that $\pi_n = (\prec_n, \succ_n)$ is a $2$-cocycle. If $\pi_n$ is not a $2$-coboundary, we are done. If $\pi_n $ is a $2$-coboundary, say $\pi_n = - \delta_{\alpha, \mathrm{Dend}} (\Phi_n)$, for some $\Phi_n \in C^1_{\alpha, \mathrm{Dend}} (A, A) = \mathrm{Hom} (A, A)$, then setting $\Phi_t = \text{id}_A + t^n \Phi_n$. We define $\prec_t' = \Phi_t^{-1} \circ \prec_t \circ \Phi_t$ and $\succ_t' = \Phi_t^{-1} \circ \succ_t \circ \Phi_t$. Then $(\prec_t', \succ_t')$ defines a formal deformation of the form
\begin{align*}
\prec_t' =~ \prec +~ t^{n+1} \prec_{n+1}' + \cdots \quad \mathrm{ and } \quad \succ_t' =~ \succ +~ t^{n+1} \succ_{n+1}' + \cdots .
\end{align*}
Thus, it follows that $\pi_{n+1}' = (\prec_{n+1}', \succ_{n+1}')$ is a $2$-cocycle in $C^2_{\alpha, \mathrm{Dend}} (A, A)$. If it is not a coboundary, we are done. If it is a coboundary, we can apply the same method again. In this way, we get a required type of equivalent deformation.
\end{proof}

As a consequence, we get the following.
\begin{thm}\label{h2}
If $H^2_{\alpha, \mathrm{Dend}} (A, A) = 0$ then every deformation of $A$ is equivalent to a trivial deformation.
\end{thm}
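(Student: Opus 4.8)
The plan is to deduce Theorem \ref{h2} directly from Lemma \ref{non-triv-lemma} by a short contrapositive argument. First I would argue as follows: suppose $(\prec_t, \succ_t)$ is a deformation of $A$ that is not equivalent to the trivial deformation, and derive a contradiction from the hypothesis $H^2_{\alpha, \mathrm{Dend}}(A, A) = 0$. Since the deformation is non-trivial, Lemma \ref{non-triv-lemma} applies and produces an equivalent deformation $(\prec_t', \succ_t')$ whose first non-vanishing higher-order term $\pi_p = (\prec_p', \succ_p')$ is a $2$-cocycle but not a $2$-coboundary. This immediately contradicts the assumption that $H^2_{\alpha, \mathrm{Dend}}(A, A) = 0$, because a $2$-cocycle that is not a coboundary represents a nonzero class in the second cohomology group. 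Hence no non-trivial deformation can exist, which is exactly the statement of the theorem.

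The one subtlety I would make explicit concerns the case in which the deformation is already trivial, so that Lemma \ref{non-triv-lemma} does not literally apply. If $(\prec_t, \succ_t)$ is trivial there is nothing to prove, since it is by definition equivalent to the trivial deformation $(\prec, \succ)$. So I would phrase the argument to cover both cases at once: given any deformation, either it is trivial (done), or it is non-trivial, in which case Lemma \ref{non-triv-lemma} combined with the vanishing of $H^2_{\alpha, \mathrm{Dend}}(A, A)$ forces the first obstruction term to be simultaneously a cocycle and a coboundary with nonzero class, an impossibility. The logical structure is therefore: the existence of a non-trivial deformation is incompatible with $H^2_{\alpha, \mathrm{Dend}}(A, A) = 0$.

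I do not expect any genuine obstacle here, since all the technical work has already been carried out in Lemma \ref{non-triv-lemma}; the theorem is a clean corollary. If anything, the only point requiring care is to present the contrapositive cleanly rather than attempting to reconstruct the inductive straightening-out procedure, which Lemma \ref{non-triv-lemma} already encapsulates. The proof should be just a few lines.
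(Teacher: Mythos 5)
Your proposal is correct and follows essentially the same route as the paper: a case split on whether the deformation is trivial, with Lemma \ref{non-triv-lemma} plus the vanishing of $H^2_{\alpha, \mathrm{Dend}}(A,A)$ handling the non-trivial case. The paper states this directly rather than as an explicit contradiction, but the logical content is identical.
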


\begin{proof}
Let $(\prec_t, \succ_t)$ be a formal deformation of $A$. If it is a trivial deformation, it is equivalent to itself.  On the other hand, if it is non-trivial, then by Lemma \ref{non-triv-lemma} and the fact that $H^2_{\alpha, \mathrm{Dend}} (A, A) = 0$, we have $(\prec_t, \succ_t)$ is equivalent to $(\prec_t' =~ \prec, \succ_t' =~ \succ)$. Hence the proof. 
\end{proof}

A hom-dendriform algebra $A$ is said to be rigid if every deformation of $A$ is equivalent to a trivial deformation. It follows from Theorem \ref{h2} that $H^2 = 0$ is a sufficient condition for the rigidity of a hom-dendriform algebra.

\subsection{Extensions of finite order deformation}

A deformation $(\prec_t, \succ_t)$ of a hom-dendriform algebra $A$ is said to be of order $n$ if 
$\prec_t$ and $\succ_t$ are of the form $\prec_t = \sum_{i=0}^n \prec_i t^i$ and $\succ_t =  \sum_{i=0}^n \succ_i t^i$. Here we discuss the problem of extension of a deformation of order $n$ to a deformation of next order.

Suppose there is an element $\pi_{n+1} = (\prec_{n+1}, \succ_{n+1})$ such that $(\overline{\prec}_t = \prec_t + \prec_{n+1} t^{n+1} ,~ \overline{\succ}_t = \succ_t + \succ_{n+1} t^{n+1})$ is a deformation of order $n+1$. Therefore, one additional deformation equation need to be satisfy 
\begin{align*}
\sum_{i+j = n+1} \pi_i \bullet \pi_j = 0.
\end{align*}
This is equivalent to
\begin{align*}
 \delta_{\alpha, \text{Dend}} ( \pi_{n+1} ) = - \sum_{i+j = n+1, i, j \geq 1} \pi_i \bullet \pi_j.
\end{align*}
The right hand side of the above equation is called the obstruction to extend the deformation. Thus, if an extension is possible, the obstruction is always given by a coboundary. However, in any case, we have the following.
\begin{lemma}
The obstruction is a $3$-cocycle in the hom-dendriform algebra cohomology of $A$, i.e
\begin{align*}
\delta_{\alpha, \mathrm{Dend}} (- \sum_{i+j = n+1, i, j \geq 1} \pi_i \bullet \pi_j) = 0
\end{align*}
\end{lemma}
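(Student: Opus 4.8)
The plan is to recast the whole statement in terms of the graded Lie bracket $[\,,\,]$ carried by the operad $\{\mathcal{O}(n)\}_{n\ge1}$, and then reduce it to a single structural identity. First I would record the two facts that turn the operadic data into a Lie-algebraic computation. The differential is the inner derivation $\delta_{\alpha,\mathrm{Dend}}=\delta_\pi=[\pi,-]$: this is immediate from comparing $\delta_\pi(f)=\pi\bullet f-(-1)^{n-1}f\bullet\pi$ with $[\pi,f]=\pi\bullet f-(-1)^{(2-1)(n-1)}f\bullet\pi$ for $f\in\mathcal{O}(n)$. Moreover $\pi$ is a multiplication, so $\pi\bullet\pi=0$, equivalently $[\pi,\pi]=0$. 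Since $\mathcal{O}(k)$ sits in degree $k-1$, every $\pi_i\in\mathcal{O}(2)$ is an \emph{odd} (degree $1$) element; hence on these elements the bracket is symmetric, $[\pi_i,\pi_j]=\pi_i\bullet\pi_j+\pi_j\bullet\pi_i$, and therefore
\[
\sum_{i+j=m}\pi_i\bullet\pi_j=\tfrac12\sum_{i+j=m}[\pi_i,\pi_j].
\]

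Next I would package the given order-$n$ deformation into the single odd element $\Pi=\sum_{i=0}^{n}\pi_i\,t^i$ of the $\mathbb{K}[t]$-linearly extended graded Lie algebra, with $t$ assigned degree $0$. The deformation equations up to order $n$ say precisely that the coefficient of $t^m$ in $\Pi\bullet\Pi=\tfrac12[\Pi,\Pi]$ vanishes for every $m\le n$. Because $\Pi$ has no terms beyond $t^n$, the coefficient of $t^{n+1}$ in $\Pi\bullet\Pi$ is exactly the obstruction $\mathrm{Ob}_{n+1}:=\sum_{i+j=n+1,\,i,j\ge1}\pi_i\bullet\pi_j$; the constraint $i,j\ge1$ is forced, since $i=0$ would require $j=n+1$, which is absent from $\Pi$. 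So it suffices to show $\delta_\pi(\mathrm{Ob}_{n+1})=0$ (the overall sign in the Lemma is immaterial for being a cocycle).

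The crux is the graded Jacobi identity in the form $[\Pi,[\Pi,\Pi]]=0$, which holds for any odd element over a field of characteristic zero: applying graded antisymmetry and the graded Jacobi identity to the triple $(\Pi,\Pi,\Pi)$ yields $3\,[\Pi,[\Pi,\Pi]]=0$, whence the claim since $\mathrm{char}\,\mathbb{K}=0$. I would then expand
\[
[\Pi,[\Pi,\Pi]]=2\,[\Pi,\Pi\bullet\Pi]=2\sum_{k,m}[\pi_k,c_m]\,t^{k+m},
\]
where $c_m$ is the coefficient of $t^m$ in $\Pi\bullet\Pi$. Extracting the coefficient of $t^{n+1}$ and using the inductive hypothesis $c_0=\cdots=c_n=0$, the only surviving term is $k=0,\ m=n+1$, giving $2\,[\pi_0,\mathrm{Ob}_{n+1}]=2\,\delta_{\alpha,\mathrm{Dend}}(\mathrm{Ob}_{n+1})=0$, which is exactly the assertion.

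The step I expect to demand the most care is not this algebraic skeleton but the justification of its ingredients at the operadic level: that $\bullet$ is a graded right pre-Lie product on $\{\mathcal{O}(n)\}$, so that $[\,,\,]$ genuinely obeys graded antisymmetry and the graded Jacobi identity, and that the sign $(-1)^{(m-1)(n-1)}$ appearing in the bracket is the one compatible with the degree convention $|\mathcal{O}(k)|=k-1$. These are precisely the homotopy $G$-algebra axioms supplied by Gerstenhaber and Voronov \cite{gers-voro} for an operad with a multiplication, already invoked above. Once they are in force, the parity bookkeeping—in particular the symmetry of $[\pi_i,\pi_j]$ and the vanishing $[\Pi,[\Pi,\Pi]]=0$—is routine, and no further combinatorial input about the maps $R_0,R_i$ is required.
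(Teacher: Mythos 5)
Your argument is correct, and it reaches the conclusion by a genuinely different (more structural) route than the paper. The paper works at the level of individual terms: it invokes the identity $\delta_{\alpha,\mathrm{Dend}}(\pi_i\bullet\pi_j)=\pi_i\bullet\delta_{\alpha,\mathrm{Dend}}(\pi_j)-\delta_{\alpha,\mathrm{Dend}}(\pi_i)\bullet\pi_j+\pi_j\cdot\pi_i-\pi_i\cdot\pi_j$, substitutes the lower-order deformation equations to turn the sum into $\sum_{p+q+r=n+1}\bigl(\pi_p\bullet(\pi_q\bullet\pi_r)-(\pi_p\bullet\pi_q)\bullet\pi_r\bigr)$ (the cup-product terms cancelling by symmetry of the index set), and then kills this triple sum by a hand cancellation using the pre-Lie identity for $\bullet$. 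You instead package the whole deformation into the single odd element $\Pi=\sum_{i=0}^{n}\pi_i t^i$ of the graded Lie algebra $(\mathcal{O},[\,,\,])$, observe that $\delta_{\alpha,\mathrm{Dend}}=[\pi,-]$ and that the lower-order equations say the coefficients $c_m$ of $\Pi\bullet\Pi=\tfrac12[\Pi,\Pi]$ vanish for $m\le n$, and extract the $t^{n+1}$-coefficient of the identity $[\Pi,[\Pi,\Pi]]=0$. Both proofs ultimately rest on the same input, namely that $\bullet$ is a graded pre-Lie product (equivalently, that $[\,,\,]$ satisfies graded antisymmetry and Jacobi), which the paper has already secured via Gerstenhaber--Voronov; your version buys a cleaner bookkeeping (the cup-product terms and the case analysis $q=r$ versus $q\neq r$ never appear, being absorbed into the Jacobi identity), at the cost of having to be explicit that $\Pi$ is homogeneous odd so that $[\Pi,\Pi]=2\,\Pi\bullet\Pi$ and $3[\Pi,[\Pi,\Pi]]=0$ in characteristic zero. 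Your identification of the $t^{n+1}$-coefficient with the obstruction (forcing $i,j\ge1$ because $\Pi$ has no $t^{n+1}$-term) and the reduction of $\sum_{k+m=n+1}[\pi_k,c_m]$ to the single term $[\pi_0,c_{n+1}]$ are both correct, so the proof is complete once the homotopy $G$-algebra axioms are granted, exactly as in the paper.
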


\begin{proof}
For any $\pi, \pi' \in C^2_{\alpha, \text{Dend}} (A, A)$, it is easy to see that
\begin{align*}
\delta_{\alpha, \mathrm{Dend}} (\pi \bullet \pi') =  \pi \bullet \delta_{\alpha, \mathrm{Dend}} (\pi') - \delta_{\alpha, \mathrm{Dend}} (\pi) \bullet \pi' ~+~ \pi' \cdot \pi -~ \pi \cdot \pi'.
\end{align*}
Therefore,
\begin{align*}
\delta_{\alpha, \mathrm{Dend}} \big(- \sum_{i+j =n+1, i, j \geq 1} \pi_i \bullet \pi_{j} \big) =~& - \sum_{i+j =n+1, i, j \geq 1} \big( \pi_i \bullet \delta_{\alpha, \mathrm{Dend}} (\pi_j) - \delta_{\alpha, \mathrm{Dend}} (\pi_i) \bullet \pi_j \big) \\
=~& \sum_{p+q+r = n+1, p, q, r \geq 1} \big(  \pi_p \bullet (\pi_q \bullet \pi_r) - (\pi_p \bullet \pi_q) \bullet \pi_r \big)\\ =~& \sum_{p+q+r = n+1, p, q, r \geq 1} A_{p, q, r}    \qquad \mathrm{(say)}.
\end{align*}
The product $\circ$ is not associative, however, they satisfy the pre-Lie identities  \cite{gers}. This in particular implies that $A_{p, q, r} = 0$ whenever $q = r$. Finally, if $q \neq r$ then $A_{p, q, r} + A_{p, r, q} = 0$ by the pre-Lie identities. Hence we have $\sum_{p+q+r = n+1, p, q, r \geq 1} A_{p, q, r}   = 0$.
\end{proof}

Thus, we have the following.
\begin{thm}
If $H^3_{\alpha, \mathrm{Dend}} (A, A) = 0$ then every finite order deformation of $A$ extends to a deformation of next order.
\end{thm}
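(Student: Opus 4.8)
The plan is to run the classical Gerstenhaber obstruction argument, exactly as set up in the discussion preceding the lemma. Start from an order-$n$ deformation $(\prec_t = \sum_{i=0}^n \prec_i t^i,\ \succ_t = \sum_{i=0}^n \succ_i t^i)$ and look for $\pi_{n+1} = (\prec_{n+1}, \succ_{n+1}) \in C^2_{\alpha,\mathrm{Dend}}(A,A)$ such that the enlarged pair $(\overline{\prec}_t = \prec_t + \prec_{n+1}t^{n+1},\ \overline{\succ}_t = \succ_t + \succ_{n+1}t^{n+1})$ is a deformation of order $n+1$. The deformation equations $\sum_{i+j=m}\pi_i\bullet\pi_j = 0$ for $0 \le m \le n$ involve only $\pi_0,\dots,\pi_n$, so they are unchanged by the new top-degree term and continue to hold. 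Hence the only condition imposed by extending to order $n+1$ is the single equation at $m = n+1$.

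First I would isolate the part of that equation that is linear in the unknown $\pi_{n+1}$. In $\sum_{i+j=n+1}\pi_i\bullet\pi_j$ the terms with $i=0$ or $j=0$ are $\pi\bullet\pi_{n+1}$ and $\pi_{n+1}\bullet\pi$, while all remaining summands have $i,j\ge 1$ and hence do not involve $\pi_{n+1}$. Since $\pi_{n+1}\in\mathcal{O}(2)$ and the differential is $\delta_{\alpha,\mathrm{Dend}}(f)=\pi\bullet f-(-1)^{n-1}f\bullet\pi$, evaluation in degree $n=2$ gives $\delta_{\alpha,\mathrm{Dend}}(\pi_{n+1}) = \pi\bullet\pi_{n+1}+\pi_{n+1}\bullet\pi$. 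Thus the order-$(n+1)$ equation is equivalent to
\[
\delta_{\alpha,\mathrm{Dend}}(\pi_{n+1}) = -\sum_{i+j=n+1,\,i,j\ge 1}\pi_i\bullet\pi_j,
\]
whose right-hand side is precisely the obstruction, built only from the already-known $\pi_1,\dots,\pi_n$.

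To finish, I would invoke the preceding lemma, which shows the obstruction is a $3$-cocycle in $C^3_{\alpha,\mathrm{Dend}}(A,A)$; it therefore determines a class in $H^3_{\alpha,\mathrm{Dend}}(A,A)$. Under the hypothesis $H^3_{\alpha,\mathrm{Dend}}(A,A)=0$ this class vanishes, so the obstruction is a coboundary: there exists $\pi_{n+1}\in C^2_{\alpha,\mathrm{Dend}}(A,A)$ with $\delta_{\alpha,\mathrm{Dend}}(\pi_{n+1})$ equal to it. This $\pi_{n+1}$ solves the displayed equation, so the enlarged pair satisfies all deformation equations through order $n+1$, yielding the required extension. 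The genuine mathematical content---that the right-hand side is closed---is entirely contained in the preceding lemma, which I am assuming; the remaining points are bookkeeping: that the lower-order equations persist, that the $\pi_{n+1}$-linear part is exactly $\delta_{\alpha,\mathrm{Dend}}(\pi_{n+1})$ with the correct sign, and that the $\pi_{n+1}$ produced lies automatically in $C^2_{\alpha,\mathrm{Dend}}(A,A)$ (hence commutes suitably with $\alpha$) because it is obtained as a coboundary inside that cochain complex.
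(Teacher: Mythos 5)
Your proposal is correct and follows exactly the argument the paper itself uses (the paper leaves the proof implicit after the obstruction lemma, but the reasoning is precisely this standard Gerstenhaber obstruction-theory argument): the order-$(n{+}1)$ equation reduces to $\delta_{\alpha,\mathrm{Dend}}(\pi_{n+1}) = -\sum_{i+j=n+1,\,i,j\ge 1}\pi_i\bullet\pi_j$, the preceding lemma shows the right-hand side is a $3$-cocycle, and $H^3_{\alpha,\mathrm{Dend}}(A,A)=0$ makes it a coboundary, producing the required $\pi_{n+1}$. Your sign check for $\delta$ on $\mathcal{O}(2)$ and the observation that $\pi_{n+1}$ automatically lies in $C^2_{\alpha,\mathrm{Dend}}(A,A)$ are both correct.
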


\section{Hom-dendriform coalgebras and deformations}

In this section, we consider the dual picture of the results as described in previous sections. Namely, we consider hom-dendriform coalgebras and study their deformations via a cohomology theory.

\begin{defn}
A hom-dendriform coalgebra is a hom-vector space $(C, \alpha )$ together with two linear maps $\triangle_\prec, \triangle_\succ : C \rightarrow C \otimes C$ satisfying the following identities
\begin{align}
(\triangle_\prec \otimes  \alpha) \circ \triangle_\prec =~& (\alpha \otimes (\triangle_\prec + \triangle_\succ)) \circ \triangle_\prec, \label{c1}\\
(\triangle_\succ \otimes \alpha) \circ \triangle_\prec =~& (\alpha \otimes \triangle_\prec ) \circ \triangle_\succ, \label{c2}\\
((\triangle_\prec + \triangle_\succ ) \otimes \alpha ) \circ \triangle_\succ =~&  (\alpha \otimes \triangle_\succ ) \circ \triangle_\succ . \label{c3}
\end{align}
\end{defn}

Note that the above three identities are dual to the identities (\ref{dend-def-1})-(\ref{dend-def-3}). A hom-dendriform coalgebra as above is said to be multiplicative if $(\alpha \otimes \alpha ) \circ \triangle_\prec = \triangle_\prec \circ \alpha$ and $(\alpha \otimes \alpha ) \circ \triangle_\succ = \triangle_\succ \circ \alpha$. In either case, when $\alpha = \text{id}_C$, we obtain dendriform coalgebras \cite{jian, das3}.

A hom-dendriform coalgebra is a splitting of an associative coalgebra in the sense that if $(C, \alpha, \triangle_\prec, \triangle_\succ)$ is a hom-dendriform coalgebra, then $(C, \alpha, \triangle_\prec + \triangle_\succ)$ is a hom-associative coalgebra. See \cite{makh-sil3} for details about hom-associative coalgebras.

Any hom-associative coalgebra is a hom-dendriform coalgebra with either $\triangle_\prec = 0$ or $\triangle_\succ = 0$. One can construct a hom-dendriform coalgebra out of a dendriform coalgebra and a morphism of it. Similarly, dual to the Examples \ref{rota-h} and \ref{o-h}, one can define Rota-Baxter operator and $\mathcal{O}$-operator on hom-associative coalgebras which induce hom-dendriform coalgebras.

In the next, we define a cohomology for multiplicative hom-dendriform coalgebras. This cohomology can be thought of as a splitting of the Cartier (coHochschild-type) cohomology of hom-associative coalgebras. We start with the following twisted analog of the coendomorphism operad.

Let $(C, \alpha)$ be a hom-vector space. We define $C^0_{\alpha, \mathrm{coAss}} (C, C) = 0$ and 
\begin{align*}
C^n_{\alpha, \mathrm{coAss}} (C, C) = \{ \sigma : C \rightarrow C^{\otimes n} |~ \alpha^{\otimes n} \circ \sigma = \sigma \circ \alpha \}, ~~~ \text{ for } n \geq 1.
\end{align*}

\begin{prop}\label{coendo-op}
The collection of vector spaces $\{ C^n_{\alpha, \mathrm{coAss}} (C, C) \}_{n \geq 1}$ forms an operad with partial compositions
\begin{align*}
\sigma \bullet_i \tau  = (({\alpha^{n-1}})^{\otimes (i-1)} \otimes \tau \otimes ({\alpha^{n-1}})^{\otimes (m-i)}) \circ \sigma,
\end{align*}
for $\sigma \in C^m_{\alpha, \mathrm{coAss}} (C, C),~\tau \in C^n_{\alpha, \mathrm{coAss}} (C, C)$ and $[r] \in C_{[m+n-1]}$.
\end{prop}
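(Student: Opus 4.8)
The plan is to recognize $\{C^n_{\alpha,\mathrm{coAss}}(C,C)\}_{n\geq 1}$ as the $\alpha$-twisted \emph{coendomorphism} operad, i.e. the arrow-reversed analog of the $\alpha$-twisted endomorphism operad used in \cite{das2} for the Hochschild-type cohomology of hom-associative algebras. Accordingly I would dualize that argument and verify three things: (i) that each $\sigma\bullet_i\tau$ again satisfies the $\alpha$-compatibility condition, so that the partial compositions are well defined; (ii) the two Markl-type associativity axioms for the family $\bullet_i$; and (iii) that $\mathrm{id}_C\in C^1_{\alpha,\mathrm{coAss}}(C,C)$ is a two-sided unit. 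Throughout, the only structural inputs are the defining relations $\alpha^{\otimes m}\circ\sigma=\sigma\circ\alpha$ and $\alpha^{\otimes n}\circ\tau=\tau\circ\alpha$ together with the interchange law $(\phi\otimes\psi)\circ(\phi'\otimes\psi')=(\phi\phi')\otimes(\psi\psi')$ for composable tensor products.

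For well-definedness I would compute $\alpha^{\otimes(m+n-1)}\circ(\sigma\bullet_i\tau)$ and $(\sigma\bullet_i\tau)\circ\alpha$ separately and show they agree. Writing $\alpha^{\otimes(m+n-1)}=\alpha^{\otimes(i-1)}\otimes\alpha^{\otimes n}\otimes\alpha^{\otimes(m-i)}$ (splitting according to the $\tau$-block) and applying the interchange law together with $\alpha^{\otimes n}\circ\tau=\tau\circ\alpha$, the first expression collapses to $((\alpha^{n})^{\otimes(i-1)}\otimes(\tau\circ\alpha)\otimes(\alpha^{n})^{\otimes(m-i)})\circ\sigma$; for the second I would instead first use $\sigma\circ\alpha=\alpha^{\otimes m}\circ\sigma$ and then absorb the resulting $\alpha^{\otimes m}$ into the inserted powers, landing on the very same expression. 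Hence $\sigma\bullet_i\tau\in C^{m+n-1}_{\alpha,\mathrm{coAss}}(C,C)$.

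The associativity axioms are the crux, and the place where the twisting must be checked. For the nested (sequential) axiom, with $\lambda\in\mathcal{O}(l)$, $\mu\in\mathcal{O}(m)$, $\nu\in\mathcal{O}(n)$ and $1\le j\le m$, I would expand both $(\lambda\bullet_i\mu)\bullet_{i+j-1}\nu$ and $\lambda\bullet_i(\mu\bullet_j\nu)$ via the interchange law. On the outer $i-1$ and $l-i$ slots the two inserted powers compose as $\alpha^{n-1}\circ\alpha^{m-1}=\alpha^{m+n-2}$, while on the middle block one reads off exactly $\mu\bullet_j\nu$; this matches $\lambda\bullet_i(\mu\bullet_j\nu)$ precisely because the latter inserts $\alpha^{(m+n-1)-1}=\alpha^{m+n-2}$. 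The decisive identity is the exponent matching $(m-1)+(n-1)=(m+n-1)-1$, which is exactly what renders the hom-twist associative, and I expect it to be the only subtle point. The parallel (horizontal) axiom $(\lambda\bullet_i\mu)\bullet_{k+m-1}\nu=(\lambda\bullet_k\nu)\bullet_i\mu$ for $i<k$ follows in the same spirit: since the $\mu$- and $\nu$-blocks sit in disjoint outputs of $\lambda$, the interchange law lets one swap the order of their insertion, and the bracketed powers of $\alpha$ on each block coincide on the two sides.

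Finally, the unit axioms $\mathrm{id}_C\bullet_1\sigma=\sigma$ and $\sigma\bullet_i\mathrm{id}_C=\sigma$ are immediate, because the relevant inserted power is $\alpha^{1-1}=\mathrm{id}_C$, so no nontrivial twisting occurs. Combining (i)--(iii) yields the asserted operad structure. The main obstacle is purely the combinatorial bookkeeping of the $\alpha$-exponents in step (ii); once the matching $(m-1)+(n-1)=(m+n-1)-1$ is observed, everything reduces to the untwisted coendomorphism operad computation.
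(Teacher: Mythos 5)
Your proposal is correct and follows essentially the same route as the paper, which simply states that the proof is obtained by dualizing the $\alpha$-twisted endomorphism operad argument of \cite[Proposition 3.2]{das2}; your explicit verification of well-definedness, the two partial-composition axioms (with the key exponent matching $(m-1)+(n-1)=(m+n-1)-1$), and the unit is exactly that dualization carried out in detail.
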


The proof of the above proposition is dual to the proof of \cite[Proposition 3.2]{das2}. When $\alpha = \text{id}_C,$ one obtain the coendomorphism operad associated to the vector space $C$. Note that
a multiplication on the operad of Proposition \ref{coendo-op} is given by an element $\triangle \in C^2_{\alpha, \mathrm{coAss}} (C, C)$ such that $\triangle \bullet_1 \triangle = \triangle \bullet_2 \triangle$. In other words, $\triangle \in C^2_{\alpha, \mathrm{coAss}} (C, C)$ satisfies
\begin{align*}
(\triangle \otimes \alpha ) \circ \triangle = (\alpha \otimes \triangle) \circ \triangle.
\end{align*}
Therefore, $\triangle$ defines a multiplicative hom-associative coalgebra on $(C, \alpha).$ The induced differential $\delta_\triangle : C^n_{\alpha, \mathrm{coAss}} (C, C) \rightarrow C^{n+1}_{\alpha, \mathrm{coAss}} (C, C)$ (also denoted by $\delta_{\alpha, \mathrm{coAss}}$) is given by
\begin{align*}
\delta_{\alpha, \mathrm{coAss}} (f) = \triangle \bullet f - (-1)^{n+1} f \bullet \triangle, ~~~ \text{ for } f \in  C^n_{\alpha, \mathrm{coAss}} (C, C).
\end{align*}
This cohomology is called the Cartier (coHochschild-type) cohomology of the multiplicative hom-associative coalgebra $(C, \alpha, \triangle)$. Since this cohomology is induced from an operad with a multiplication, it follows from \cite{gers-voro} that this cohomology inherits a Gerstenhaber structure.

\medskip

Next, we consider a new operad associated to a hom-vector space dual to the operad given in Section \ref{sec-3}. Let $(C, \alpha )$
 be a hom-vector space. Define $C^0_{\alpha, \text{coDend}} (C, C) = 0$ and 
 \begin{align*}
 C^n_{\alpha, \text{coDend}} (C, C) = \{ \sigma : \mathbb{K}[C_n] \otimes C \rightarrow C^{\otimes n} |~  \alpha^{\otimes n} \circ \sigma ([r]; a) = \sigma ([r]; \alpha (a) ), \forall ~[r] \in C_n \}.
 \end{align*}
 
 Then we have the following.
 
 \begin{thm}
 The collections of vector spaces $\{ C^n_{\alpha, \mathrm{coDend}} (C,C) \}_{n \geq 1}$ forms an operad with partial compositions 
\begin{align*}
& (\sigma \bullet_i \tau) ([r]; ~\_~) \\
~& = ((\alpha^{n-1})^{\otimes (i-1)} \otimes g (R_i (m; 1, \ldots, n, \ldots, 1)[r]; ~\_~) \otimes (\alpha^{n-1})^{\otimes (m-i)}) \circ f (R_0 (m; 1, \ldots, n, \ldots, 1)[r]; ~\_~)
\end{align*}
 for $\sigma \in C^m_{ \alpha, \mathrm{coDend}} (C, C),~ \tau \in C^n_{\alpha, \mathrm{coDend}} (C, C), ~[r] \in C_{m+n-1},$
and the identity element $\mathrm{id} \in  C^1_{\alpha, \mathrm{coDend}} (C,C)$ given by $\mathrm{id} ([1]; c) = c,$ for all $c \in C.$ 
 
 Moreover, the collection of maps $\{ \Phi_n : C^n_{\alpha, \mathrm{coDend}} (C, C) \rightarrow C^n_{ \alpha, \mathrm{coAss}} (C, C) \}_{n \geq 1}$ given by
 \begin{align*}
 \Phi_n (\sigma ) = \sigma ([1]; ~) + \cdots + \sigma ([n]; ~ ), ~~~~ \text{ for } \sigma \in C^n_{\alpha, \mathrm{coDend}} (C, C)
 \end{align*}
 is a morphism between operads.
 \end{thm}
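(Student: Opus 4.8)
The plan is to treat this statement as the linear dual of the operad structure on $C^\bullet_{\alpha, \mathrm{Dend}}(A, A)$ and of its comparison map $S$, both described in Section~\ref{sec-3}. The combinatorial maps $R_0$ and $R_i$ are completely independent of the twisting map $\alpha$; dualizing reverses the order of the maps in each composite (which is why $\sigma(R_0[r]; \_)$ now appears on the right, applied first) but leaves every combinatorial relation among the $R_0$'s and $R_i$'s intact. The placement of the twisting powers $\alpha^{n-1}$ on the untouched tensor slots is exactly the one appearing in the coendomorphism operad of Proposition~\ref{coendo-op}. Thus the first half is a combination of \cite{das3} (the color bookkeeping, i.e.\ the case $\alpha = \mathrm{id}_C$) and \cite{das2} (the $\alpha$-twisting), dualized as in Proposition~\ref{coendo-op}.

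First I would check that $\bullet_i$ is well defined, i.e.\ that $\sigma \bullet_i \tau$ lies in $C^{m+n-1}_{\alpha, \mathrm{coDend}}(C, C)$. Since the $R_0, R_i$ relabelings do not involve $\alpha$, one only has to push $\alpha^{\otimes (m+n-1)}$ through the composite; this follows from the defining identities $\alpha^{\otimes m} \circ \sigma([r]; \_) = \sigma([r]; \alpha(\_))$ and $\alpha^{\otimes n} \circ \tau([r]; \_) = \tau([r]; \alpha(\_))$ together with the commutation of $\alpha$ with $\alpha^{n-1}$, precisely as in Proposition~\ref{coendo-op}. Next I would verify the two operad associativity axioms (the nested and the disjoint cases). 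Here the color identities satisfied by $R_0$ and $R_i$ under iterated composition are the ones established in \cite{das3}, and the only additional point is to confirm that, in a double composition, the $\alpha$-powers decorating a slot left untouched by both compositions multiply correctly, e.g.\ $\alpha^{n-1}$ followed by $\alpha^{p-1}$ gives $\alpha^{n+p-2} = \alpha^{(n+p-1)-1}$; this is the same bookkeeping as in \cite{das2}. Finally $\Phi_1(\mathrm{id}) = \mathrm{id}([1]; \_) = \mathrm{id}_C$ is the unit of the coendomorphism operad.

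For the morphism claim the key is the identity $\Phi_{m+n-1}(\sigma \bullet_i \tau) = \Phi_m(\sigma) \bullet_i \Phi_n(\tau)$, which I would prove by a direct finite sum over colors. Expanding the left side as $\sum_{[s] \in C_{m+n-1}} (\sigma \bullet_i \tau)([s]; \_)$ and substituting the explicit formulas for $R_0$ and $R_i$, I split the sum into the three ranges $s \le i-1$, $i \le s \le i+n-1$, and $i+n \le s \le m+n-1$. In the first and third ranges $R_i[s] = [1] + \cdots + [n]$, so the $i$-th slot already reads $\tau([1]; \_) + \cdots + \tau([n]; \_) = \Phi_n(\tau)$, while $R_0$ sends these $s$ bijectively onto $\{[1], \ldots, [i-1]\}$ and $\{[i+1], \ldots, [m]\}$. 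In the middle range $R_0[s] = [i]$ is constant and $R_i[s] = [s-i+1]$ sweeps over all of $C_n$, so summing again produces $\Phi_n(\tau)$ in the $i$-th slot with $\sigma([i]; \_)$ on the right. Hence every color $[j]$ of $C_m$ occurs exactly once, each carrying $\Phi_n(\tau)$ in the $i$-th slot, and the sum factorizes as
\[ \sum_{[s]} (\sigma \bullet_i \tau)([s]; \_) = \big( (\alpha^{n-1})^{\otimes (i-1)} \otimes \Phi_n(\tau) \otimes (\alpha^{n-1})^{\otimes (m-i)} \big) \circ \Big( \sum_{j=1}^m \sigma([j]; \_) \Big) = \Phi_m(\sigma) \bullet_i \Phi_n(\tau), \]
which is the required compatibility.

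I expect the main obstacle to be the $\alpha$-power bookkeeping inside the operad associativity axioms: one must be sure that the powers $\alpha^{n-1}$ attached to slots at the first composition and those attached at the second composition combine to the single power predicted by the formula for $\bullet_i$ on the threefold composite, uniformly across the three ranges of colors. Once this is matched against the purely combinatorial $R_0/R_i$ identities of \cite{das3}, the remaining verifications — well-definedness, the unit axiom, and the factorization above for $\Phi$ — are routine.
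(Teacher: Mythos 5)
Your proposal is correct and follows exactly the route the paper intends: the paper omits the argument entirely, remarking only that it is ``along the same line of'' the dendriform-algebra case of \cite{das3, das4} combined with the $\alpha$-twisting of \cite{das2}, which is precisely the dualization-plus-bookkeeping you carry out, and your color-by-color computation of $\Phi_{m+n-1}(\sigma\bullet_i\tau)=\Phi_m(\sigma)\bullet_i\Phi_n(\tau)$ is the right verification of the morphism claim. The only point worth making explicit is that in the disjoint-slot operad axiom one also needs the multiplicativity condition $\alpha^{\otimes n}\circ\tau=\tau\circ\alpha$ to slide the powers of $\alpha$ past $\tau$ (not just to compose powers on untouched slots), but this is the same mechanism you already invoke for well-definedness.
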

 
 The proof of the above theorem is along the same line of \cite{das3, das4}. Hence we will not repeat it here. Note that a multiplication in the operad $\{ C^n_{\alpha, \mathrm{coDend}} (C,C) , \bullet_i \}_{n \geq 1}$ is given by an element $\triangle \in C^2_{\alpha, \mathrm{coDend}} (C, C)$ satisfying $\triangle \bullet_1 \triangle = \triangle \bullet_2 \triangle$. The element $\triangle$ is equivalent to two maps $\triangle_\prec, \triangle_\succ : C \rightarrow C \otimes C$ given by
 \begin{align*}
 \triangle_\prec = \triangle ([1]; ~)   \quad \text{ and } \quad \triangle_\succ = \triangle ([2]; ~).
 \end{align*}
 The condition $\triangle \bullet_1 \triangle = \triangle \bullet_2 \triangle$ is equivalent to the fact that $(\triangle_\prec, \triangle_\succ)$ satisfy the identities (\ref{c1})-(\ref{c3}). Therefore, a multiplication is given by a multiplicative hom-dendriform structure on $(C, \alpha)$. We define a differential $\delta_{\alpha, \text{coDend}} : C^n_{\alpha, \text{coDend}} (C,C) \rightarrow C^{n+1}_{\alpha, \text{coDend}} (C,C)$ to be the one induced from the multiplication $\triangle$ on this operad, i.e. $\delta_{\alpha, \text{coDend}} (\sigma) = \triangle \bullet \sigma - (-1)^{n-1} \sigma \bullet \triangle$, for $\sigma \in C^n_{\alpha, \text{coDend}}(C, C)$.
  Explicitly, it is given by
  \begin{align*}
( \delta_{\alpha, \mathrm{coDend}} (\sigma) )& ([r] \otimes c) \\
 =~& (\alpha^{n-1} \otimes \sigma (R_2 (2;1,n)[r];~ \_~ )) \circ \triangle (R_0 (2;1, n)[r]; c) \nonumber \\
+~& \sum_{i=1}^n  (-1)^i ~({\alpha}^{\otimes (i-1)} \otimes \triangle_{R_i (n; 1, \ldots, 2, \ldots, 1)[r]} \otimes {\alpha}^{\otimes (n-i)}) \circ \sigma (R_0 (n;1, \ldots, 2, \ldots, 1)[r]; c) \nonumber \\
+~& (-1)^{n+1}~ (\sigma (R_1 (2;n,1)[r]; ~\_ ~) \otimes \alpha^{n-1} ) \circ  \triangle (R_0 (2;n,1)[r]; c), \nonumber
\end{align*}
for $\sigma \in C^n_{\alpha, \text{coDend}}(C, C)$, $[r] \in C_{n+1}$ and $c \in C$.

We denote the corresponding cohomology by $H^*_{\alpha, \mathrm{coDend}} (C, C)$. 
 When $\alpha = \text{id}_C$, this cohomology coincides with the one constructed in \cite{das4}.
Note that, since the above cohomology is induced from an operad with a multiplication, the cohomology inherits a Gerstenhaber structure.
 
 \subsection{Deformation}
 Here we study deformations of multiplicative hom-dendriform coalgebras. This is dual to the deformation of hom-dendriform algebras. Therefore, we will only state the results.
 
 A deformation of a multiplicative hom-dendriform coalgebra $(C, \alpha, \triangle_\prec, \triangle_\succ)$ is given by two formal sums $\triangle_{\prec, t} = \sum_{i \geq 0} t^i \triangle_{\prec, i} $ and $\triangle_{\succ, t} = \sum_{i \geq 0} t^i \triangle_{\succ, i}$ (with $\triangle_{\prec, 0} = \triangle_\prec$ and $\triangle_{\succ, 0} = \triangle_\succ$) such that $(C[[t]], \alpha, \triangle_{\prec, t}, \triangle_{\succ, t})$ is a multiplicative hom-dendriform coalgebra over $\mathbb{K}[[t]].$
 
 Therefore, we must have the following identities hold
\begin{align}
\sum_{i+j= n} (\triangle_{\prec, i} \otimes  \alpha) \circ \triangle_{\prec, j} =~& \sum_{i+j= n} (\alpha \otimes (\triangle_{\prec, i} + \triangle_{\succ, i})) \circ \triangle_{\prec, j}, \label{def-c-1}\\
\sum_{i+j= n} (\triangle_{\succ, i} \otimes \alpha) \circ \triangle_{\prec, j} =~& \sum_{i+j= n}(\alpha \otimes \triangle_{\prec, i} ) \circ \triangle_{\succ, j}, \label{def-c-2}\\
\sum_{i+j= n} ((\triangle_{\prec, i} + \triangle_{\succ, i} ) \otimes \alpha ) \circ \triangle_{\succ, j} =~&  (\alpha \otimes \triangle_{\succ, i} ) \circ \triangle_{\succ, j} \label{def-c-3}
\end{align}
for each $n \geq 0$. These equations are called deformation equations for the hom-dendriform coalgebra. To write these equations in a compact form, we use the following notations. Define $\triangle_i : \mathbb{K}[C_2] \otimes C \rightarrow C \otimes C$ by $\triangle_i ([1]; ~) = \triangle_{\prec, i}$ and $\triangle_{i} ([2]; ~)  = \triangle_{\succ, i}$, for $i \geq 0$.  Then the equations (\ref{def-c-1})-(\ref{def-c-3}) can be simply reads as
 \begin{align*}
 \sum_{i+j = n } \triangle_i \bullet_1 \triangle_j =  \sum_{i+j = n } \triangle_i \bullet_2 \triangle_j , ~~~ n \geq 0.
 \end{align*}
 The above identity holds automatically for $n = 0$ as we know $\triangle \bullet_1 \triangle =   \triangle \bullet_2 \triangle.$ For $n=1$, we get
 \begin{align*}
 \triangle \bullet_1 \triangle_1 + \triangle_1 \bullet_1 \triangle = \triangle \bullet_2 \triangle_1 + \triangle_1 \bullet_2 \triangle
 \end{align*}
 or equivalently, $\delta_{\alpha, \mathrm{coDend}} (\triangle_1) = 0.$
 Therefore, $\triangle_1 \in C^2_{\alpha, \mathrm{coDend}} (C, C)$ defines a $2$-cocycle. It is called the infinitesimal of the deformation. Thus, the infinitesimal of a deformation is a $2$-cocycle in the cohomology of the hom-dendriform coalgebra.
 
 \begin{defn}
Two deformations $(\triangle_{\prec, t}, \triangle_{\succ, t})$ and $(\triangle'_{\prec, t}, \triangle'_{\succ, t})$ of a hom-dendriform coalgebra $(C, \alpha, \triangle_\prec, \triangle_\succ)$ are said to be equivalent  if there exists a formal isomorphism $\Phi_t = \sum_{i \geq 0} t^i \Phi_i : C[[t]] \rightarrow C[[t]]$ with each $\Phi_i$ commute with $\alpha$ and $\Phi_0 = \text{id}_C$ such that
\begin{align*}
\triangle'_{\prec, t} \circ \Phi_t =~& (\Phi_t \otimes \Phi_t) \circ \triangle_{\prec, t}, \\
\triangle'_{\succ, t} \circ \Phi_t =~& (\Phi_t \otimes \Phi_t) \circ \triangle_{\succ, t}.
\end{align*}
\end{defn}
Note that each $\Phi_i : C \rightarrow C$ can be thought of as an element in $C^1_{\alpha, \text{coDend}}  (C, C)$ Then the above conditions  of the equivalence can be simply expressed as
\begin{align*}
\sum_{i+j=n} \triangle_i' ([r]; ~\_~) \circ \Phi_j = \sum_{i+j +k = n} ( \Phi_i \otimes \Phi_j ) \circ \triangle_k ([r]; ~\_~),
\end{align*}
for $n \geq 0$ and $[r] = [1], [2].$
The condition for $n = 0$ holds automatically as $\Phi_0 = \text{id}_C$. For $n = 1$, we have
\begin{align*}
\triangle_1' ([r]; ~\_~) ~+~ \triangle ([r]; ~\_~) \circ \Phi_1 =~ \triangle_1 ([r]; ~\_~) ~+ ~ (\text{id} \otimes \Phi_1 ) \circ \triangle ([r]; ~\_~) ~+~ (\Phi_1 \otimes \text{id}) \circ \triangle ([r]; ~\_~).
\end{align*}
This shows that the difference $\triangle_1' - \triangle_1$ is a coboundary $\delta_{\alpha, \text{coDend}} { (\Phi_1)}$. Therefore, the infinitesimals corresponding to equivalent deformations are cohomologous, hence, they gives rise to same cohomology class in $H^2_{\alpha, \text{coDend}} (C, C).$ 
 
A deformation $(\triangle_{\prec, t}, \triangle_{\succ, t})$ of a multiplicative hom-dendriform coalgebra $(C, \alpha, \triangle_\prec, \triangle_\succ)$ is said to be trivial if it is equivalent to the deformation $(\triangle'_{\prec, t} = \triangle_\prec, \triangle'_{\succ, t} = \triangle_\succ)$.
 
 \begin{thm}
 Let $(C, \alpha, \triangle_\prec, \triangle_\succ)$ be a multiplicative hom-dendriform coalgebra. If $H^2_{\alpha, \mathrm{coDend}} (C, C) = 0$ then every deformation of $C$ is equivalent to a trivial deformation.
 \end{thm}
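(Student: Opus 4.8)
The plan is to transport the proof of Theorem~\ref{h2} to the coalgebra setting, its engine being the dual of Lemma~\ref{non-triv-lemma}. The whole argument reduces to a single normal-form statement: every deformation $(\triangle_{\prec,t}, \triangle_{\succ,t})$ is equivalent to one whose first non-vanishing coefficient $\triangle_p = (\triangle_{\prec,p}, \triangle_{\succ,p})$ (if any) is a $2$-cocycle in $C^2_{\alpha,\mathrm{coDend}}(C,C)$, and that when this cocycle is a coboundary it can be removed by an equivalence, raising the order of the leading term. Granting this, the theorem is immediate: since $H^2_{\alpha,\mathrm{coDend}}(C,C) = 0$ forces every such leading cocycle to be a coboundary, the removal step applies at every stage, and the composition of the successive equivalences carries $(\triangle_{\prec,t}, \triangle_{\succ,t})$ to the trivial deformation $(\triangle_\prec, \triangle_\succ)$.

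To set up the normal form I would suppose $\triangle_1 = \cdots = \triangle_{p-1} = 0$ and $\triangle_p \neq 0$, and extract the coefficient of $t^p$ from the deformation equations $\sum_{i+j=n} \triangle_i \bullet_1 \triangle_j = \sum_{i+j=n} \triangle_i \bullet_2 \triangle_j$. With $\triangle_0 = \triangle$ all the mixed terms drop out exactly as in the $n=1$ computation already displayed in the text, leaving $\delta_{\alpha,\mathrm{coDend}}(\triangle_p) = 0$, so that $\triangle_p$ is a $2$-cocycle. If $\triangle_p = -\,\delta_{\alpha,\mathrm{coDend}}(\Phi_p)$ for some $\Phi_p \in C^1_{\alpha,\mathrm{coDend}}(C,C) \simeq \mathrm{Hom}(C,C)$ commuting with $\alpha$, I would take $\Phi_t = \mathrm{id}_C + t^p \Phi_p$ and define the equivalent deformation by the dualized conjugation
\begin{align*}
\triangle'_{\prec,t} = (\Phi_t \otimes \Phi_t) \circ \triangle_{\prec,t} \circ \Phi_t^{-1}, \qquad \triangle'_{\succ,t} = (\Phi_t \otimes \Phi_t) \circ \triangle_{\succ,t} \circ \Phi_t^{-1}.
\end{align*}
Expanding $\Phi_t^{-1} = \mathrm{id}_C - t^p \Phi_p + \cdots$, the coefficient of $t^p$ in $\triangle'$ equals $\triangle_p + \delta_{\alpha,\mathrm{coDend}}(\Phi_p) = 0$, so the new deformation begins in degree at least $p+1$.

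Two classes of routine checks remain, which I would not spell out in detail: that the conjugated maps $(\triangle'_{\prec,t}, \triangle'_{\succ,t})$ again satisfy (\ref{def-c-1})--(\ref{def-c-3}) over $\mathbb{K}[[t]]$ --- automatic because conjugation by an invertible $\Phi_t$ together with the twist $\Phi_t \otimes \Phi_t$ transports one hom-dendriform coalgebra structure to another --- and that $\Phi_t$ is an equivalence in the sense of the definition preceding the theorem. The step I expect to demand the most care is the order-$p$ bookkeeping of the conjugation: because the comultiplications compose in the reversed direction and $\Phi_p$ enters both through $\Phi_t \otimes \Phi_t$ on the left and through $\Phi_t^{-1}$ on the right, one must verify that the degree-$p$ contribution assembles precisely into $\delta_{\alpha,\mathrm{coDend}}(\Phi_p)$, with its $[1]$ and $[2]$ components matching $\triangle_{\prec,p}$ and $\triangle_{\succ,p}$ simultaneously --- exactly the $n=1$ identity recorded earlier in the text. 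A secondary point is the $t$-adic convergence of the infinite composition of equivalences in the $H^2 = 0$ case, which is harmless since the leading degree $p$ strictly increases at each step.
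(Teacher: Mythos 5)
Your proposal is correct and follows exactly the route the paper intends: it dualizes Lemma \ref{non-triv-lemma} and Theorem \ref{h2} to the coalgebra setting, which is precisely what the paper means when it says the proof is ``similar to Lemma \ref{non-triv-lemma} and Theorem \ref{h2}'' and omits the details. Your order-$p$ bookkeeping (the coefficient of $t^p$ in $(\Phi_t \otimes \Phi_t)\circ \triangle_t \circ \Phi_t^{-1}$ being $\triangle_p + \delta_{\alpha,\mathrm{coDend}}(\Phi_p)$) matches the $n=1$ identity already displayed in the text, so the argument is complete.
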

 
 The proof of the above theorem is similar to Lemma \ref{non-triv-lemma} and Theorem \ref{h2}. Hence we omit the details. It follows that $H^2_{\alpha, \mathrm{coDend}} (C, C) = 0$ implies that the hom-dendriform coalgebra $C$ is rigid.
 
 Similarly, one may study an extension of a finite order deformation to the next order. The vanishing of the third cohomology ensures such an extension.
 
 \medskip
 
\noindent {\bf Acknowledgement.} The author would like to thank the referee for his/her comments on the earlier version of the manuscript.

\end{document}